\definecolor{Darkblue}{rgb}{0,0,0.4}
\definecolor{Brown}{cmyk}{0,0.61,1.,0.60}
\definecolor{Purple}{cmyk}{0.45,0.86,0,0}
\definecolor{Darkgreen}{rgb}{0.133,0.543,0.133}
\newcommand{\OO}{\mathcal{O}}
\newcommand{\spc}{\delta}
\theoremstyle{definition}
\newtheorem{theorem}{Theorem}
\newtheorem{lemma}{Lemma}
\title{Grid Induced Minor Theorem for Graphs of Small Degree}
\author{Tuukka Korhonen\thanks{Department of Informatics, University of Bergen, Norway. \texttt{tuukka.korhonen@uib.no}. This work was supported by the Research Council of Norway via the project BWCA (grant no. 314528).}}
\date{}
\begin{document}
\maketitle

\begin{abstract}
A graph $H$ is an induced minor of a graph $G$ if $H$ can be obtained from $G$ by vertex deletions and edge contractions.
We show that there is a function $f(k, d) = \OO(k^{10} + 2^{d^5})$ so that if a graph has treewidth at least $f(k, d)$ and maximum degree at most $d$, then it contains a \mbox{$k \times k$-grid} as an induced minor.
This proves the conjecture of Aboulker, Adler, Kim, Sintiari, and Trotignon~[Eur.~J.~Comb.,~98,~2021] that any graph with large treewidth and bounded maximum degree contains a large wall or the line graph of a large wall as an induced subgraph.
It also implies that for any fixed planar graph $H$, there is a subexponential time algorithm for maximum weight independent set on $H$-induced-minor-free graphs.
\end{abstract}

\section{Introduction}
A graph $H$ is a minor of a graph $G$ if $H$ can be obtained as a contraction of a subgraph of $G$.
An induced minor is a minor that is obtained as a contraction of an induced subgraph.

The famous grid minor theorem of Robertson and Seymour~\cite{DBLP:journals/jct/RobertsonS86} (see also~\cite{DBLP:journals/jacm/ChekuriC16,DBLP:journals/jctb/ChuzhoyT21}) states that there is a function $f : \mathbb{N} \rightarrow \mathbb{N}$ so that any graph with treewidth at least $f(k)$ contains a $k \times k$-grid as a minor.
A question with several applications in graph theory and algorithms~\cite{DBLP:journals/ejc/AboulkerAKST21,DBLP:journals/jctb/AbrishamiCV22,DBLP:conf/icalp/Korhonen21} is when can ``minor'' be replaced by ``induced minor'' in the grid minor theorem.
In general this is not possible: complete graphs have unbounded treewidth but do not contain a $2 \times 2$-grid as an induced minor.
However, one could expect that minors and induced minors behave similarly in sparse graphs.
Fomin, Golovach, and Thilikos proved that for any fixed graph $H$, there is a constant $c_H$ so that any $H$-minor-free graph with treewidth at least $c_H \cdot k$ contains a $k \times k$-grid as an induced minor~\cite{DBLP:journals/jct/FominGT11}.
In this paper, we give a grid induced minor theorem for another important class of sparse graphs, in particular for the class of graphs with bounded maximum degree.

\begin{theorem}
\label{the:main}
There is a function $f(k, d) = \OO(k^{10} + 2^{d^5})$ so that for any positive integers $k,d$ it holds that if a graph has treewidth at least $f(k, d)$ and maximum degree at most $d$, then it contains a $k \times k$-grid as an induced minor.
\end{theorem}

For graphs with treewidth at least $2^{d^5}$, the size of the grid that we obtain is up to a subpolynomial $2^{\OO(\log^{5/6} k)}$ factor the same as the size of the grid in the grid minor theorem.
In particular, the bound $\OO(k^{10} + 2^{d^5})$ on $f(k, d)$ follows from the most recent bound on the grid minor theorem~\cite{DBLP:journals/jctb/ChuzhoyT21} and will improve if the bound on the grid minor theorem is improved.

Together with some routing arguments from~\cite{DBLP:journals/ejc/AboulkerAKST21}, \autoref{the:main} implies that the following conjecture of Aboulker, Adler, Kim, Sintiari, and Trotignon~\cite{DBLP:journals/ejc/AboulkerAKST21} holds.

\begin{restatable}{corollary}{cormain}
\label{cor:main}
For every $d \in \mathbb{N}$, there is a function $f_d : \mathbb{N} \rightarrow \mathbb{N}$ such that every graph with maximum degree at most $d$ and treewidth at least $f_d(k)$ contains a $k \times k$-wall or the line graph of a $k \times k$-wall as an induced subgraph.
 \end{restatable}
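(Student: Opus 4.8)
The plan is to obtain \autoref{cor:main} by feeding \autoref{the:main} into the routing arguments of Aboulker, Adler, Kim, Sintiari and Trotignon~\cite{DBLP:journals/ejc/AboulkerAKST21}. The bridging statement I would use from their work is the following degree-bounded reduction: for all positive integers $k,d$ there is an integer $m=m(k,d)$ such that every graph of maximum degree at most $d$ that contains the $m\times m$-grid as an induced minor contains the $k\times k$-wall or the line graph of the $k\times k$-wall as an induced subgraph. Granting this, \autoref{cor:main} is immediate: set $f_d(k):=f(m(k,d),d)$ for the function $f$ of \autoref{the:main}; a graph with maximum degree at most $d$ and treewidth at least $f_d(k)$ then has an $m(k,d)\times m(k,d)$-grid as an induced minor by \autoref{the:main}, hence one of the two required induced subgraphs. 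Because \autoref{cor:main} asks only for the \emph{existence} of $f_d$, the growth rate of $m$ in $k$ is irrelevant, which is convenient since the routing step will typically lose a Ramsey-type factor.

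Turning to the bridging statement, here is how I would approach it, following~\cite{DBLP:journals/ejc/AboulkerAKST21}. Fix a model of the induced grid minor, i.e.\ pairwise disjoint connected branch sets $V_{i,j}$ with $V_{i,j}$ adjacent to $V_{i',j'}$ in the graph exactly when $(i,j)$ is adjacent to $(i',j')$ in the grid. The first move is to use the degree bound to \emph{thin} the model: inside each $V_{i,j}$, replace the branch set by an inclusion-minimal connected subtree that still reaches all adjacent branch sets; by minimality such a subtree is an induced path or an induced subdivided star with at most four leaves, and the union $S$ of these connectors is an induced subgraph of the graph in which, thanks to the degree bound, every vertex is incident to at most $d$ edges that are not edges of a connector (call these the \emph{chords}). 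One then restricts to a wall-shaped subpattern of the grid, merging the locally few branch sets that are joined by grid edges which are not wall edges, so that every remaining connector reaches at most three branch sets.

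The heart of the matter, and the step I expect to be the main obstacle, is to make $S$ homogeneous and chord-controlled so that it is forced to contain one of the two target graphs. Since there are only boundedly many isomorphism types for the bounded-radius ball of $S$ around a three-leaf connector, and boundedly many types for a two-leaf connector, a Ramsey-type argument on the wall yields a large subwall on which all branching connectors look alike and all ``path'' connectors look alike; the bound of $d$ chords per vertex then lets a pigeonhole (or random-subwall) argument eliminate the chords that run between distant parts of the subwall. A case analysis on the surviving homogeneous local picture shows that essentially two possibilities remain: the one with no chords incident to the branching connectors, which yields an induced $k\times k$-wall, and the one in which the chords around each branching connector form a triangle, which yields an induced line graph of a $k\times k$-wall. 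All of these ingredients---the thinning, the merging into a wall pattern, the Ramsey homogenisation, and the final two-case dichotomy---are carried out in~\cite{DBLP:journals/ejc/AboulkerAKST21}, so I would invoke their routing lemma as a black box; the only new input that \autoref{cor:main} requires beyond their paper is \autoref{the:main} itself.
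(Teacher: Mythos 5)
Your proposal matches the paper's proof in structure: both obtain \autoref{cor:main} by composing \autoref{the:main} with a routing lemma extracted from the proof of Theorem 1.1 of~\cite{DBLP:journals/ejc/AboulkerAKST21} and setting $f_d(k)$ accordingly. The one discrepancy is the exact form of the black box: the lemma actually extractable from that paper concerns \emph{triangulated} grids (and needs no degree hypothesis), so the paper adds the small observation that a $k \times k$-grid contains a triangulated $k/6 \times k/6$-grid as an induced minor, whereas your degree-bounded version for plain grids is not literally stated there --- though it follows from theirs via that same observation, so this is a citation-level fix rather than a gap.
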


In addition to~\cite{DBLP:journals/ejc/AboulkerAKST21}, this conjecture has been explicitly mentioned by Abrishami, Chudnovsky, Dibek, Hajebi, Rz\k{a}\.{z}ewski, Spirkl, and Vu{\v{s}}kovi{\'c} in three articles of their ``Induced subgraphs and tree decompositions'' series~\cite{abrishami2021induced,abrishami2021induced3,abrishami2022induced} and the main results of the first two articles of this series~\cite{abrishami2021induced,DBLP:journals/jctb/AbrishamiCV22} are special cases of this conjecture.

\autoref{the:main} has several direct algorithmic implications.
In particular, by known algorithms using treewidth~\cite{DBLP:journals/siamcomp/Bodlaender96,DBLP:journals/iandc/Courcelle90}, it implies that a large number of combinatorial problems can be solved in linear-time on graphs that exclude a planar graph as an induced minor and have bounded maximum degree.
Note that in contrast, for any non-planar graph $H$, for example the maximum independent set problem is NP-complete on subcubic graphs that exclude $H$ as an induced minor~\cite{DBLP:journals/jct/Mohar01}.

\autoref{the:main} implies also the following algorithmic result.

\begin{restatable}{corollary}{coralg}
\label{cor:alg}
For every fixed planar graph $H$, there is a $2^{\OO(n / \log^{1/6} n)}$ time algorithm for maximum weight independent set on $H$-induced-minor-free graphs, where $n$ is the number of vertices of the input graph.
\end{restatable}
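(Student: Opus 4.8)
The plan is to obtain \autoref{cor:alg} from \autoref{the:main} via the standard recipe for subexponential algorithms on sparse graph classes: branch on vertices of large degree to reduce to the bounded-maximum-degree case, where \autoref{the:main} bounds the treewidth, and then solve maximum weight independent set by the textbook tree-decomposition dynamic program. Two facts are needed: (i) for $H$-induced-minor-free inputs, bounded maximum degree implies bounded treewidth via \autoref{the:main}; and (ii) a high-degree branching rule whose search tree is subexponentially large.

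For (i), I would use the folklore fact that every planar graph $H$ is an induced minor of the $k \times k$ grid for some integer $k = k(H)$: fix a planar embedding of $H$, represent each vertex by a short sub-path (a ``blob'') of a sufficiently fine grid and each edge by a grid path joining the corresponding blobs, routed along the embedding and kept at grid-distance at least $2$ from all non-incident features; then contracting each blob together with its incident edge-paths yields exactly $H$, with no superfluous adjacencies. Consequently every $H$-induced-minor-free graph is $(k(H) \times k(H))$-grid-induced-minor-free, so by \autoref{the:main} an $H$-induced-minor-free graph of maximum degree at most $t$ has treewidth at most $f(k(H), t) = \OO(k(H)^{10} + 2^{t^5})$.

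For (ii), fix a threshold $t = t(n)$ to be chosen later, and on an $H$-induced-minor-free graph $G$ with $n$ vertices proceed as follows. If $\Delta(G) \le t$, then by (i) the treewidth of $G$ is $\OO(k(H)^{10} + 2^{t^5})$, so we solve the problem in $2^{\OO(k(H)^{10} + 2^{t^5})} \cdot n^{\OO(1)}$ time by computing a constant-factor-approximate tree decomposition and running the textbook dynamic program. Otherwise, pick a vertex $v$ with $\deg(v) > t$, recurse on $G - v$ and on $G - N[v]$, and return $\max\!\big(w(v) + \alpha(G - N[v]),\ \alpha(G - v)\big)$, where $\alpha(\cdot)$ denotes the weight of a maximum weight independent set and $w(v)$ is the weight of $v$. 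Correctness is immediate, and every recursive call is made on an induced subgraph of $G$, hence is still $H$-induced-minor-free. Since the branching vertex is chosen deterministically, each leaf of the search tree is determined by its sequence of include/exclude choices, and such a sequence contains at most $n/(t+2)$ include steps because each of them deletes at least $t+2$ vertices; hence the search tree has at most $\OO(n) \cdot \sum_{i \le n/(t+2)} \binom{n}{i} = 2^{\OO((n \log t)/t)}$ leaves, and the total running time is $2^{\OO((n \log t)/t)} \cdot 2^{\OO(k(H)^{10} + 2^{t^5})} \cdot n^{\OO(1)}$.

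It remains to balance the two exponents. Taking $t$ to be the largest integer with $2^{t^5} \le n/\log n$ gives $t = \Theta((\log n)^{1/5})$; then $2^{\OO(k(H)^{10} + 2^{t^5})} = 2^{\OO(n/\log n)} = 2^{\OO(n/\log^{1/6} n)}$ (using $k(H)^{10} = \OO(1)$), while $(n \log t)/t = \Theta\!\big(n \log\log n / (\log n)^{1/5}\big) = \OO(n/\log^{1/6} n)$ because $\log\log n = \OO((\log n)^{1/5 - 1/6})$, so the running time is $2^{\OO(n/\log^{1/6} n)}$, as claimed. There is no deep obstacle, as the corollary is essentially a formal consequence of \autoref{the:main}; the points that require care are the (standard) realization of planar graphs as induced minors of grids and the choice of $t$, and it is precisely the $\log t$ loss in the branching analysis that yields the exponent $1/6$ instead of roughly $1/5$ --- any improvement to the $2^{d^5}$ term in \autoref{the:main} would carry over directly to the exponent obtained here.
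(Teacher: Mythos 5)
Your proposal is correct and follows essentially the same route as the paper: branch on vertices of degree above a threshold of order $(\log n)^{1/5}$, bound the search tree by the number of include-steps, and in the base case use \autoref{the:main} (together with the folklore fact that every planar $H$ is an induced minor of a large grid) to conclude the treewidth is $\OO(n/\log n)$ and run the standard tree-decomposition dynamic program. The only cosmetic difference is that you invoke the bound $f(k(H),t)=\OO(2^{t^5})$ directly, whereas the paper argues contrapositively that large treewidth would force a large grid induced minor; the balancing of the two exponents and the resulting $2^{\OO(n/\log^{1/6}n)}$ bound are identical.
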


Algorithms for maximum weight independent set on $H$-induced-minor-free graphs for specific planar graphs $H$ have recently received attention~\cite{DBLP:journals/corr/abs-2111-04543,DBLP:conf/focs/GartlandL20,DBLP:conf/stoc/GartlandLPPR21}.
In this area a central open question asked by Dallard, Milanic, and Storgel~\cite{DBLP:journals/corr/abs-2111-04543} is if there exists a fixed planar graph $H$ so that maximum weight independent set is NP-complete on $H$-induced-minor-free graphs, or whether a polynomial time or quasipolynomial time algorithm could be obtained for $H$-induced-minor-free graphs for all planar graphs $H$.
\autoref{the:main} can be seen as a step towards the latter direction.

\section{Preliminaries}
We use $\log$ to denote base-2 logarithm.
A graph $G$ has a set of vertices $V(G)$ and a set of edges $E(G)$.
For a vertex $v \in V(G)$, $N(v)$ denotes the set of its neighbors.
For a set of vertices $S \subseteq V(G)$, $G[S]$ denotes the subgraph of $G$ induced by $S$.
The distance between two vertices in a graph is the minimum number of edges on a path between them, or if they are in different connected components the distance is infinite.

A minor model of a graph $H$ in a graph $G$ is a collection $\{X_v\}_{v \in V(H)}$ of pairwise disjoint vertex sets $X_v \subseteq V(G)$ called \emph{branch sets} so that each induced subgraph $G[X_v]$ is connected, and if there is an edge $uv \in E(H)$, then there is an edge between $X_u$ and $X_v$ in $G$.
A graph $G$ contains $H$ as a minor if and only if there is a minor model of $H$ in $G$.
An induced minor model of $H$ is a minor model with an additional constraint that if $u,v \in V(H)$, $u \neq v$, and $uv \notin E(H)$, then there are no edges between $X_u$ and $X_v$.
A graph $G$ contains $H$ as an induced minor if and only if there is an induced minor model of $H$ in $G$.

A tree decomposition of a graph $G$ is a pair $(T, \beta)$, where $T$ is a tree and $\beta$ is a function $\beta : V(T) \rightarrow 2^{V(G)}$ mapping nodes of $T$ to sets of vertices of $G$ called bags so that
\begin{enumerate}
\item $V(G) = \bigcup_{i \in V(T)} \beta(i)$,
\item for every $uv \in E(G)$ there exists $i \in V(T)$ with $\{u,v\} \subseteq \beta(i)$, and
\item for every $v \in V(G)$, the nodes $\{i \in V(T) \mid v \in \beta(i)\}$ induce a connected subtree of $T$.
\end{enumerate}
The width of a tree decomposition is $\max_{i \in V(T)} |\beta(i)|-1$ and the treewidth of a graph is the minimum width of a tree decomposition of it.

We recall a standard lemma on treewidth.

\begin{lemma}
\label{lem:disjcontract}
Let $G$ be a graph of treewidth $k$ and let $X_1, \ldots, X_p$ be pairwise disjoint subsets of $V(G)$ so that for each $X_i$ it holds that $|X_i| \le q$ and $G[X_i]$ is connected.
The treewidth of the graph obtained by contracting each set $X_i$ into a single vertex is at least $k/q$.
\end{lemma}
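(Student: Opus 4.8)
The plan is to take an optimal tree decomposition of the contracted graph and expand it into a tree decomposition of $G$ by ``un-contracting'' each branch vertex back into the set it came from; each bag then grows by a factor of at most $q$, which gives the bound. Concretely, write $G'$ for the graph obtained from $G$ by contracting each $X_i$ to a single vertex $x_i$, and let $\phi\colon V(G)\to V(G')$ be the associated surjection, so that $\phi(u)=x_i$ whenever $u\in X_i$, while every vertex $u$ lying in no $X_i$ survives untouched and has $\phi(u)=u$. Fix a tree decomposition $(T,\beta')$ of $G'$ of width $\tw(G')$, and define $(T,\beta)$ on the same tree $T$ by $\beta(t):=\phi^{-1}(\beta'(t))$ for every $t\in V(T)$ (i.e.\ replace each $x_i$ occurring in $\beta'(t)$ by all of $X_i$).

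The first step is to check that $(T,\beta)$ is a valid tree decomposition of $G$. Vertex coverage is immediate because $\phi$ is onto and the bags $\beta'(t)$ cover $V(G')$. The connectivity condition holds since, for every $u\in V(G)$, the node set $\{t : u\in\beta(t)\}$ equals $\{t : \phi(u)\in\beta'(t)\}$, which induces a connected subtree of $T$ by the corresponding property of $(T,\beta')$. For the edge condition, let $uv\in E(G)$: if $\phi(u)=\phi(v)=x_i$ then $u,v\in X_i\subseteq\beta(t)$ for any $t$ with $x_i\in\beta'(t)$, and such a $t$ exists; and if $\phi(u)\neq\phi(v)$, then $\phi(u)\phi(v)\in E(G')$ — contracting the connected sets $X_i$ never deletes an edge running between two distinct parts — so by property~(2) applied in $(T,\beta')$ some bag $\beta'(t)$ contains both $\phi(u)$ and $\phi(v)$, and hence $\beta(t)\supseteq\{u,v\}$.

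It remains to bound the width: since $|X_i|\le q$ for every $i$ and every surviving vertex has a singleton preimage under $\phi$, we get $|\beta(t)|=\sum_{y\in\beta'(t)}|\phi^{-1}(y)|\le q\,|\beta'(t)|\le q(\tw(G')+1)$ for each $t$, so $k=\tw(G)\le q(\tw(G')+1)-1$, which rearranges to the claimed bound $\tw(G')\ge k/q$. I do not expect a genuine obstacle here — this is the routine ``blow-up'' argument for tree decompositions — and the only things that need care are that the $X_i$ are pairwise disjoint (so $\phi$ is well defined and each bag blows up by a factor of at most $q$, not more) and the observation that an edge between two distinct branch sets really is preserved by the contraction.
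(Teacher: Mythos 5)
Your proof is correct and is essentially the paper's own argument: take an optimal tree decomposition of the contracted graph $G'$ and replace each contracted vertex $x_i$ by its branch set $X_i$, so every bag grows by a factor of at most $q$; you merely phrase it directly rather than by contradiction and spell out the verification of the three tree-decomposition axioms, which the paper leaves implicit. (Your final rearrangement has the same harmless slack as the paper's: $k \le q(\tw(G')+1)-1$ literally yields $\tw(G') \ge (k+1-q)/q$ rather than $k/q$, an off-by-one that is irrelevant to how the lemma is used.)
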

\begin{proof}
Let $G'$ be the graph obtained from $G$ by contracting each set $X_i$ into a vertex $x_i$.
Suppose that $G'$ has a tree decomposition of width at most $k/q-1$.
We construct a tree decomposition of $G$ by replacing each vertex $x_i$ in the decomposition by the set $X_i$.
This increases the bag sizes by a factor of $q$, so we obtain a tree decomposition of $G$ of width at most $k-1$, which is a contradiction.
\end{proof}

\section{Proof of \autoref{the:main}}
We will first define \emph{sparsifiable graphs} and show that minors in sparsifiable graphs correspond to induced minors.
Then we show that if a graph has treewidth $k$ and maximum degree at most $\log^{1/5} k$, then it has an induced subgraph that is sparsifiable and has treewidth $k / 2^{\OO(\log^{5/6} k)}$.

\subsection{Sparsifiable graphs}
We say that a vertex $v$ of a graph is \emph{sparsifiable} if it satisfies one of the following conditions:

\begin{enumerate}
\item $v$ has degree at most 2,
\item $v$ has degree 3 and all of its neighbors have degree at most 2, or
\item $v$ has degree 3, one of its neighbors has degree at most 2, and the two other neighbors form a triangle with $v$.
\end{enumerate}

We call a graph sparsifiable if every vertex of it is sparsifiable.
We say that a vertex is of type~2 if it satisfies the condition~2 and that a vertex is of type~3 if it satisfies the condition~3 but does not satisfy the condition~2.

Now we show that minors and induced minors are highly related in sparsifiable graphs.

\begin{lemma}
\label{lem:spars}
Let $G$ be a sparsifiable graph and $H$ a graph of minimum degree at least 3.
If $G$ contains $H$ as a minor, then $G$ contains $H$ as an induced minor.
\end{lemma}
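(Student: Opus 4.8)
The plan is to start from an arbitrary minor model $\{X_v\}_{v \in V(H)}$ of $H$ in $G$ and massage it into an induced minor model by removing the ``extra'' edges between branch sets. The key structural observation is that in a sparsifiable graph, a vertex of degree $3$ has very limited ability to create such extra edges: either it has low degree itself (degree $\le 2$), or it is of type~2 or type~3, and in the type~2/3 cases most of its neighbors have degree $\le 2$. Concretely, I would first massage each branch set $X_v$ so that it is an \emph{inclusion-minimal} connected subset still touching every $X_u$ with $uv \in E(H)$; minimality plus $\deg_H(v) \ge 3$ forces $G[X_v]$ to be a tree with at least three leaves-to-neighbors or to have a constrained shape, and in particular every vertex of $X_v$ that is ``internal'' (has $\ge 2$ neighbors inside $X_v$) can be understood via the sparsifiability conditions.

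The main step is then to argue that between two non-adjacent branch sets $X_u, X_v$ there can be at most ``controllable'' interaction, and to reroute or delete vertices to kill it. Here the degree-$3$ case is the crux: if a vertex $w \in X_v$ has an edge to $X_u$ with $uv \notin E(H)$, then $w$ has degree $3$, so it has at most three neighbors total; since $G[X_v]$ must stay connected and still reach all the $H$-neighbors of $v$, I want to show $w$ can be assumed to be an endpoint (a ``pendant'' attachment used only to hit one required neighbor), and then the type~2 or type~3 condition tells me its other neighbors have degree $\le 2$. A degree-$\le 2$ vertex lying between two branch sets is exactly the situation that is easy to clean up: such a vertex can be deleted and its (at most one) remaining obligation re-routed, or simply absorbed, since it cannot simultaneously serve as a connector for three different sets. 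I expect the type~3 triangle case to need separate care: there the two high-ish neighbors of $w$ form a triangle with $w$, so locally we see a $K_3$ hanging off a degree-$\le 2$ vertex, and I would handle this by contracting the triangle or by choosing which of the three triangle vertices to place in which branch set so that the offending cross-edge disappears.

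Putting it together, I would proceed by a discharging/exchange argument on the total number of cross-edges (edges between non-adjacent branch sets): pick such an edge $wx$ with $w \in X_u$, $x \in X_v$, $uv \notin E(H)$; both $w$ and $x$ have degree $3$ and fall into cases~1--3 above; in each case exhibit a local modification (delete a degree-$\le 2$ vertex, re-route a path, or re-assign a triangle vertex) that preserves connectivity of all branch sets and all required adjacencies, while strictly decreasing the number of cross-edges. Since $H$ has minimum degree $\ge 3$ we never accidentally destroy a needed branch set by shrinking it (there is always ``slack'' — a branch set serving $\ge 3$ neighbors cannot collapse to nothing), and the process terminates with an induced minor model. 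The hardest part will be verifying that the local re-routing in the degree-$3$ cases never cascades — i.e., that fixing one cross-edge does not create two new ones — which is where the precise degree bounds in the definition of sparsifiable (neighbors of the degree-$3$ vertex having degree $\le 2$) are doing the real work; making this book-keeping airtight is the crux of the proof.
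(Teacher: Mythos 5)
Your overall strategy --- fix a minor model minimizing the number of cross-edges (the paper calls them violating edges) and derive a contradiction via a local exchange --- is exactly the paper's approach, and your identification of the two regimes (an endpoint of degree at most $2$, versus two degree-$3$ endpoints which are then forced to be of type~3 and hence to form a triangle with a third vertex $c$) is also correct. But the proposal defers precisely the part that constitutes the proof: you write that making the book-keeping airtight is the crux and then do not do it, and one of your intermediate claims is off (an endpoint of a cross-edge need not have degree $3$; it may have degree $2$, and that case must be handled first --- note also that a type-2 vertex can never be adjacent to a degree-$3$ vertex, which is what forces both endpoints to be type~3 in the second regime). The inclusion-minimality preprocessing you propose is unnecessary; minimizing the number of violating edges already gives everything needed.

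Here is what the missing case analysis actually looks like, so you can see that no rerouting or cascading control is needed. In the degree-$\le 2$ case with $a \in X_u$ adjacent to $b \in X_v$, the condition $\deg_H(u) \ge 3$ forces the other neighbor of $a$ to lie in $X_u$ (otherwise $X_u = \{a\}$ and $u$ would have degree at most $2$ in the model), so $a$ is a leaf of $G[X_u]$; deleting $a$ from $X_u$ keeps $X_u$ connected and only severs contact with $X_v$, which is fine since $uv \notin E(H)$, so the count of violating edges strictly drops. In the triangle case, if $c$ lies in $X_u$, in $X_v$, in no branch set, or in some $X_w$ with $w$ not adjacent to both $u$ and $v$, one again simply deletes $a$ from $X_u$ or $b$ from $X_v$ by the same forcing argument. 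The only genuinely new move is when $c \in X_w$ with $w$ adjacent to both $u$ and $v$: there one transfers both $a$ and $b$ into $X_w$. Your worry that this could create new cross-edges is resolved by a containment observation, not by book-keeping: since $a$ and $b$ have degree $3$ and their third neighbors are forced into $X_u$ and $X_v$ respectively, we have $N(a) \cup N(b) \subseteq X_u \cup X_v \cup X_w \cup \{a,b\}$, and $w$ is adjacent to both $u$ and $v$ in $H$, so every edge leaving the relocated vertices goes to an allowed branch set. Until these verifications are written out, the argument is a plan rather than a proof.
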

\begin{proof}
Let $\{X_v\}_{v \in V(H)}$ be a minor model of $H$ in $G$.
Say that an edge $ab \in E(G)$ is violating if there exists non-adjacent vertices $u,v$ of $H$ so that $a \in X_u$ and $b \in X_v$.
Note that if there are no violating edges, then $\{X_v\}_{v \in V(H)}$ is also an induced minor model of $H$.
Suppose that $\{X_v\}_{v \in V(H)}$ minimizes the number of violating edges among all minor models of $H$.
We will show by contradiction that the number of violating edges must be zero.

Let $ab \in E(G)$ be a violating edge with $a \in X_u$ and $b \in X_v$.
Both $a$ and $b$ must have degree at least~2 because otherwise $u$ or $v$ would have degree~1.
First, consider the case that $ab$ has a degree~2 endpoint, and by symmetry assume that the degree~2 endpoint is $a \in X_u$.
The neighbor of $a$ not in $X_v$ must be in $X_u$ because $u$ has degree at least~3.
It also holds that $G[X_u \setminus \{a\}]$ is connected because $a$ has degree 1 in $G[X_u]$.
The only other branch set that $a$ is adjacent to than $X_u$ is $X_v$, but there is no edge between $u$ and $v$ in $H$, so we can replace $X_u$ by $X_u \setminus \{a\}$ in the minor model.
This decreases the number of violating edges.

The other case is that both $a$ and $b$ have degree 3.
Because they are adjacent, they both are of type 3, so they form a triangle together with a vertex $c$.
First, consider the case that $c \in X_v$.
Now, $a$ is adjacent to two vertices in $X_v$ and the third vertex it is adjacent to must be in $X_u$ because $u$ has degree at least~3.
Therefore, we can replace $X_u$ by $X_u \setminus \{a\}$ in the minor model, which decreases the number of violating edges.

The same argument as above works also if $c \in X_u$ or if $c$ is not in any branch set.
The remaining case is that $c \in X_w$ for some $w \in V(H) \setminus \{u,v\}$.
If $w$ is not adjacent to $u$ (resp. to $v$), then removing $a$ from $X_u$ (resp. $b$ from $X_v$) again decreases the number of violating edges, so we can assume that $w$ is adjacent to both $u$ and $v$.
Because $u$ and $v$ have degree at least 3 and $u$ and $v$ are not adjacent, it holds that the unique vertex in $N(a) \setminus \{b,c\}$ is in $X_u$ and that the unique vertex in $N(b) \setminus \{a,c\}$ is in $X_v$.
We replace $X_u$ by $X_u \setminus \{a\}$, $X_v$ by $X_v \setminus \{b\}$, and $X_w$ by $X_w \cup \{a,b\}$.
This removes the violating edge $ab$, and does not create any new violating edges because $w$ is adjacent to both $u$ and $v$, and both $N(a)$ and $N(b)$ are contained in $X_u \cup X_v \cup X_w$.
\end{proof}

\autoref{lem:spars} cannot be directly used when $H$ is the $k \times k$-grid because its corners have degree 2.
However, contracting four edges each incident to a distinct corner of the $k \times k$-grid yields a graph of minimum degree 3 that contains the $k-2 \times k-2$-grid as an induced minor.
Therefore \autoref{lem:spars} implies that if a sparsifiable graph contains a $k \times k$-grid minor, then it contains a $k-2 \times k-2$-grid induced minor.

\subsection{Sparsifying a graph}
We will make use of the following theorem proved by Chekuri and Chuzhoy that every graph contains a degree-3 subgraph that approximately preserves its treewidth.

\begin{theorem}[\cite{DBLP:conf/soda/ChekuriC15}]
\label{the:deg3spa}
There exists a constant $\spc$ so that every graph with treewidth $k \ge 2$ has a subgraph with maximum degree 3 and treewidth at least $k / \log^\spc k$.
\end{theorem}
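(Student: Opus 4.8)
The plan is to treat \autoref{the:deg3spa} as a known result of Chekuri and Chuzhoy and recall the strategy behind it. The starting point is the standard correspondence between treewidth and routing: a graph of treewidth $k$ contains a \emph{well-linked set} of size $\Omega(k)$, and from such a set one can extract a robust routing structure --- a ``path-of-sets system'', i.e.\ a sequence of $\ell$ pairwise-disjoint clusters $S_1,\ldots,S_\ell$, each well-linked and each with two boundaries of size $w$, joined consecutively by $w$ vertex-disjoint paths disjoint from the clusters, with $w\cdot\ell = \Omega(k/\mathrm{polylog}\,k)$ --- whose mere existence already certifies treewidth $\Omega(k/\mathrm{polylog}\,k)$. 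So the first step is to invoke this structure theorem and reduce the problem to: given such a routing structure inside $G$, find a \emph{subgraph} of maximum degree $3$ that still supports enough of it to keep treewidth $k/\log^{\spc}k$ for some constant $\spc$.

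The second step is the degree reduction itself. Along a path-of-sets system the only vertices that can have large degree are the ``connector'' vertices where the between-cluster linkages meet cluster boundaries, together with the internal routing certifying well-linkedness of each cluster; elsewhere everything already lies on disjoint paths and has degree at most $2$. The idea is to discard a $1/\mathrm{polylog}$ fraction of the paths (and of the clusters' internal capacity) so that each surviving connector carries only a bounded number of path-ends, and then to locally reroute: replace a connector of degree $d'$ by a short degree-$3$ ``splitter'' --- a path or binary tree on $O(d')$ already-present vertices --- that absorbs the $d'$ incident path-ends one at a time. After these local surgeries one obtains a subgraph of maximum degree at most $4$, and then a second pass breaking each degree-$4$ vertex gives maximum degree $3$, while the width and length parameters have each shrunk by only a polylogarithmic factor.

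The main obstacle is keeping the polylogarithmic loss under control while guaranteeing that the degree-reduced object genuinely \emph{has} large treewidth rather than merely looking locally sparse: an arbitrary family of pairwise-intersecting paths (a ``crossbar'') need not have treewidth comparable to its width, so one cannot simply take the union of the paths of any such family. This is precisely why the argument must be carried out on a path-of-sets system, where well-linkedness of the clusters and vertex-disjointness of the linkages are exactly the properties preserved, up to bounded factors, both by deleting a small fraction of the elements and by the splitter surgery, and where the resulting sparse path-of-sets system again certifies treewidth $\Omega(\text{width}\cdot\text{length})$. Making this precise requires an iterative argument that alternately extracts a large bounded-degree well-linked subgraph from each cluster and boosts the number or width of clusters to compensate, and balancing this bookkeeping so that the total degradation stays polylogarithmic is the technical heart of~\cite{DBLP:conf/soda/ChekuriC15}; here I would simply cite it.
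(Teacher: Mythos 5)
The paper does not prove \autoref{the:deg3spa} at all; it is imported verbatim as a black box from Chekuri and Chuzhoy~\cite{DBLP:conf/soda/ChekuriC15}, so your decision to defer to that citation matches the paper exactly, and your sketch of the cited argument (well-linked sets, path-of-sets systems, and local degree-reduction with polylogarithmic loss) is a fair summary of what that reference actually does.
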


For the rest of this section we will use $\spc$ to denote the constant $\spc$ given by \autoref{the:deg3spa}.
We note that instead of \autoref{the:deg3spa} we could alternatively use the grid minor theorem, but that would yield a significantly worse dependence on the treewidth in our result.

A distance-5 independent set in a graph $G$ is a set $I \subseteq V(G)$ of vertices so that for any pair $u,v \in I$ of distinct vertices, the distance between $u$ and $v$ in $G$ is at least 5.
Next we show how to make all vertices in a distance-5 independent set sparsifiable while approximately preserving treewidth.
This will be then used to prove \autoref{the:main} by observing that the vertices of a graph with maximum degree $d$ can be partitioned into $d^4+1$ distance-5 independent sets.

\begin{lemma}
\label{lem:main}
Let $G$ be a graph of treewidth $k$ and maximum degree $d$ with $2d^2 \le k$, and let $I \subseteq V(G)$ be a distance-5 independent set in $G$.
There exists an induced subgraph $G[S]$ of $G$ so that $G[S]$ has treewidth at least $k/((d^2+1) \log^\spc k)$ and every vertex in $I \cap S$ is sparsifiable in $G[S]$.
\end{lemma}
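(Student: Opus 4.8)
The plan is to reduce to \autoref{the:deg3spa} by contracting a radius-$2$ ball around each vertex of $I$, applying that theorem to the contracted graph, and then \emph{decoding} the resulting minor of maximum degree $3$ back into an induced subgraph of $G$ in which the surviving vertices of $I$ are sparsifiable. We may assume $d \ge 2$: if $d \le 1$ every vertex of $G$ has degree at most $2$, hence is already sparsifiable, and $S = V(G)$ works.

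For $v \in I$ let $B_v = N^2[v]$ be the ball of radius $2$ around $v$. Since $I$ is a distance-$5$ independent set, the sets $\{B_v\}_{v \in I}$ are pairwise disjoint; moreover each $G[B_v]$ is connected and $|B_v| \le 1 + d + d(d-1) = d^2 + 1$. Let $G_1$ be obtained from $G$ by contracting each $B_v$ into a single vertex $w_v$. By \autoref{lem:disjcontract}, $\tw(G_1) \ge k/(d^2+1)$; since $k \ge 2d^2$ and $d \ge 2$ this is more than $1$, and as treewidth is an integer it is at least $2$. Applying \autoref{the:deg3spa} to $G_1$ therefore yields a subgraph $H_1 \subseteq G_1$ of maximum degree $3$ with $\tw(H_1) \ge \tw(G_1)/\log^\spc \tw(G_1) \ge k/((d^2+1)\log^\spc k)$, the last inequality using $\tw(G_1) \le k$.

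Next I would decode $H_1$ into an induced subgraph of $G$. For every edge of $H_1$ fix a representative edge of $G$: the edge itself if neither endpoint is of the form $w_v$, and otherwise any edge of $G$ witnessing it, which has exactly one endpoint in the relevant ball. For each $v$ with $w_v \in V(H_1)$, call the (at most three) endpoints lying in $B_v$ of the representatives of the $H_1$-edges at $w_v$ the \emph{ports} of $v$; observe that every port is at distance exactly $2$ from $v$, since any edge of $G$ with an endpoint outside $B_v$ has its endpoint inside $B_v$ at distance $2$ from $v$ (the other endpoint being at distance at least $3$). For such $v$ I will choose a connected set $Q_v \subseteq B_v$ containing all ports of $v$, which may or may not contain $v$ itself, as described below; for $v$ with $w_v \notin V(H_1)$, discard $B_v$ entirely. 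Let $S$ be the union of the chosen sets $Q_v$ together with all vertices of $V(H_1)$ that lie outside every ball. Using $\{x\}$ as the branch set of a surviving non-ball vertex $x$ and $Q_v$ as the branch set of $w_v$, we get a minor model of $H_1$ in $G[S]$: each representative edge joins its two intended branch sets, the point being that a representative of a $w_v w_{v'}$ edge of $H_1$ has one endpoint that is a port of $v$ and one that is a port of $v'$. Hence $\tw(G[S]) \ge \tw(H_1) \ge k/((d^2+1)\log^\spc k)$. Finally, if $v \in I \cap S$ then $v \in Q_v$, and since every neighbour of $v$ and every neighbour of a neighbour of $v$ lies in $B_v$ while $S \cap B_v = Q_v$, the degree of $v$ and the degrees of its neighbours are the same in $G[S]$ as in $G[Q_v]$.

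It remains to choose each $Q_v$ so that $v$ is sparsifiable in $G[Q_v]$ whenever $v \in Q_v$, and this case analysis is where I expect the main difficulty. If $v$ has at most two ports, route each to $v$ through one of $v$'s neighbours (possible since ports are at distance $2$) and let $Q_v$ be the resulting set, which contains $v$; then $\deg_{G[Q_v]}(v) \le 2$, so $v$ is sparsifiable. So assume $v$ has three ports $p_1, p_2, p_3$ and, for each $i$, let $M_i = N(v) \cap N(p_i)$, a nonempty set of candidate ``middle'' neighbours of $v$. If $M_i \cap M_j \neq \emptyset$ for some $i \neq j$, route $p_i$ and $p_j$ through a common middle vertex and $p_l$ through an arbitrary middle vertex; the resulting $Q_v$ contains $v$ with $\deg_{G[Q_v]}(v) \le 2$. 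Otherwise the $M_i$ are pairwise disjoint; fix $m_i \in M_i$, and note $m_i$ is then non-adjacent to $p_j$ for every $j \neq i$ (else $m_i \in M_i \cap M_j$). If at most one of the pairs $m_1m_2, m_1m_3, m_2m_3$ is an edge of $G$, set $Q_v = \{v, m_1, m_2, m_3\} \cup \{p_1, p_2, p_3\}$: then $v$ has degree $3$ in $G[Q_v]$, and either all three $m_i$ have degree $2$ (so $v$ satisfies condition~2), or, say $m_1m_2 \in E(G)$, in which case $m_3$ has degree $2$ and $v, m_1, m_2$ form a triangle (so $v$ satisfies condition~3). If at least two of those pairs are edges of $G$, then $\{m_1, m_2, m_3\}$ induces a connected subgraph, so $Q_v = \{m_1, m_2, m_3\} \cup \{p_1, p_2, p_3\}$ is connected and we \emph{delete} $v$, whence $v \notin I \cap S$ and no constraint on $v$ is required. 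The remaining, routine work is to verify in each subcase that $Q_v$ is connected and that no additional edge inside $B_v$ inflates $\deg_{G[Q_v]}(v)$ or the degree of a neighbour of $v$ past what conditions~1--3 of sparsifiability permit.
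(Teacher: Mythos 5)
Your proof is correct and follows essentially the same route as the paper: contract the radius-2 balls $B_v$, apply \autoref{the:deg3spa} to the contracted graph, and then re-expand each surviving ball to a small connected set containing the (at most three) distance-2 ``ports'' so that $v$ either disappears or becomes sparsifiable. The only difference is bookkeeping---the paper prunes $N(v)\cap S$ via ``private terminals'' whereas you pick explicit middle vertices $m_i \in M_i$---but both arguments reduce to the same final case analysis on the edges among the three middle vertices, and the ``routine'' verifications you defer do go through.
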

\begin{proof}
For each vertex $v \in I$, let $B_v$ be the set of vertices at distance at most 2 from $v$ (i.e. $B_v = \{v\} \cup N(v) \cup N(N(v))$).
The induced subgraphs $G[B_v]$ are connected, and because $I$ is a distance-5 independent set, the sets $B_v$ are disjoint.
Let $H$ be the graph obtained by contracting each set $B_v$ into one vertex.
Because $|B_v| \le d^2+1$, \autoref{lem:disjcontract} implies that the treewidth of $H$ is at least $k/(d^2+1)$.
By \autoref{the:deg3spa}, there exists a subgraph $H'$ of $H$ with maximum degree 3 and treewidth at least $k/((d^2+1) \log^\spc k)$.
We can assume that $V(H') = V(H)$.
Let $Q \subseteq V(G)$ be the vertices of $G$ that are not in any $B_v$.
The graph $H'$ is a minor of $G$, with a minor model whose branch sets are the sets $B_v$ for each $v \in I$ and singleton sets $\{u\}$ for each $u \in Q$.

We will construct a set $S \subseteq V(G)$ so that $Q \subseteq S$, all vertices of $I \cap S$ are sparsifiable in $G[S]$, and $H'$ has a minor model in $G$ whose branch sets are the sets $B_v \cap S$ for all $v \in I$ and singleton sets $\{u\}$ for all $u \in Q$.
The graph $G[S]$ therefore will contain $H'$ as a minor and therefore will have treewidth at least $k/((d^2+1) \log^\spc k)$.

Consider a vertex $v \in I$.
We will construct $B_v \cap S$ so that either $v \notin S$ or $v$ is sparsifiable in $G[S]$.
Because $H'$ has maximum degree 3, we can choose a set $T \subseteq B_v$ of at most three terminal vertices that are at distance 2 from $v$ whose connectivity should be preserved in $G[B_v \cap S]$ in order to preserve the minor model of $H'$.
We start by setting $B_v \cap S$ to be the union of the shortest paths from the terminals $t \in T$ to $v$.
Note that the only vertices at distance 2 from $v$ that are on the shortest paths are the terminals $T$.
Then, we say that a terminal $t \in T$ is private to a vertex $u \in N(v) \cap S$ if $u$ is the only vertex in $N(v) \cap S$ adjacent to $t$.
If a vertex $u \in N(v) \cap S$ does not have a private terminal, we remove $u$ from $S$.
Now, every vertex in $N(v) \cap S$ has a private terminal.
If $|N(v) \cap S| \le 2$, the vertex $v$ has degree at most 2 in $G[S]$ and we are done.
The remaining case is that $|N(v) \cap S| = 3$ and each vertex in $N(v) \cap S$ has a private terminal, implying that $|T|=3$ and the edges between $N(v) \cap S$ and $T$ form a matching.
If the graph $G[N(v) \cap S]$ is connected, we remove $v$ from $S$ and are done.
If the graph $G[N(v) \cap S]$ is not connected, it contains at most one edge.
If it contains no edges, the vertices $N(v) \cap S$ have degree 2 in $G[S]$, and therefore $v$ is a type~2 vertex in $G[S]$.
If it contains one edge, then $v$ is a type~3 vertex in $G[S]$.
\end{proof}

Next we finish the proof of \autoref{the:main} by applying \autoref{lem:main} $d^4+1$ times.

\begin{lemma}
\label{lem:end}
If a graph $G$ has treewidth $k$ and maximum degree at most $\log^{1/5} k$, then it contains an induced subgraph that is sparsifiable and has treewidth at least $k/2^{\OO(\log^{5/6} k)}$.
\end{lemma}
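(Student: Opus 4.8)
The plan is to partition $V(G)$ into a small number of distance-5 independent sets and apply \autoref{lem:main} once to each of them, each application making that part sparsifiable while costing only a bounded multiplicative factor of treewidth. Write $d$ for the maximum degree of $G$, so $d \le \log^{1/5} k$. We may assume $k$ exceeds any fixed constant needed below, since otherwise the claim is immediate: if $G$ is edgeless then $G$ itself is sparsifiable, and otherwise a shortest induced cycle of $G$ is sparsifiable and has treewidth $2$, so the conclusion holds after enlarging the implied constant.

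First I would record the partitioning step: the vertices of a graph of maximum degree $d$ can be partitioned into $d^4+1$ distance-5 independent sets. Two vertices must go into different parts exactly when their distance is at most $4$, so this amounts to a proper colouring of the graph $G^4$ on $V(G)$ in which $uv$ is an edge iff $1\le \mathrm{dist}_G(u,v)\le 4$. A BFS tree from any fixed vertex has at most $d$ nodes at level $1$ and at most $d-1$ children at each node of level $\ge 1$, so that vertex has at most $d\bigl(1+(d-1)+(d-1)^2+(d-1)^3\bigr)\le d\cdot d^3=d^4$ vertices within distance $4$; hence $G^4$ has maximum degree at most $d^4$ and a greedy colouring uses at most $d^4+1$ colours. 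Fix a resulting partition $V(G)=I_1\cup\dots\cup I_m$ with $m=d^4+1\le \log^{4/5}k+1$.

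Now put $G_0=G$ and, for $j=1,\dots,m$, apply \autoref{lem:main} to $G_{j-1}$ with the set $I_j\cap V(G_{j-1})$ to get an induced subgraph $G_j=G_{j-1}[S_j]$ in which every vertex of $I_j\cap V(G_j)$ is sparsifiable and with $\tw(G_j)\ge \tw(G_{j-1})/\bigl((d^2+1)\log^{\spc}\tw(G_{j-1})\bigr)$. Three observations make this iteration valid. First, passing to an induced subgraph never decreases distances, so $I_j\cap V(G_{j-1})$ remains a distance-5 independent set in $G_{j-1}$. Second, passing to an induced subgraph never increases the maximum degree, so every $G_{j-1}$ still has maximum degree at most $d$; this lets us use the same $d$ in every call, and the precondition $2d^2\le\tw(G_{j-1})$ of \autoref{lem:main} follows from the treewidth estimate below (for $k$ large). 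Third, and this is the crux: being a sparsifiable vertex is preserved by further vertex deletions. Indeed, if a sparsifiable vertex has degree at most $2$ it keeps degree at most $2$; if it has degree $3$ and loses a neighbour it drops to degree at most $2$; and if it has degree $3$ and keeps all three neighbours, their degrees can only decrease and any triangle it belonged to persists (no edge among its neighbours is removed), so it remains of type $2$ or type $3$. Consequently every vertex of $G_m$ lies in some $I_j$ and is sparsifiable in $G_j$, hence also in the induced subgraph $G_m$; that is, $G_m$ is sparsifiable.

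It remains to estimate $\tw(G_m)$. Treewidth is non-increasing along $G=G_0\supseteq G_1\supseteq\dots\supseteq G_m$, so $\tw(G_{j-1})\le k$ and every step divides the treewidth by at most $(d^2+1)\log^{\spc}k\le(\log^{2/5}k+1)\log^{\spc}k\le 2\log^{\spc+2/5}k$. Therefore
\[
\tw(G_m)\ge\frac{k}{\bigl(2\log^{\spc+2/5}k\bigr)^{m}}\ge\frac{k}{\bigl(2\log^{\spc+2/5}k\bigr)^{\log^{4/5}k+1}},
\]
and the logarithm of the denominator equals $(\log^{4/5}k+1)\bigl(1+(\spc+2/5)\log\log k\bigr)=\OO\bigl(\log^{4/5}k\cdot\log\log k\bigr)=\OO(\log^{5/6}k)$, using $\log^{4/5}k\cdot\log\log k=o(\log^{5/6}k)$. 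Hence $\tw(G_m)\ge k/2^{\OO(\log^{5/6}k)}$, as required. The one genuinely delicate point is the third observation above: it is precisely what allows the $d^4+1$ separate calls to \autoref{lem:main} to be composed without a later call undoing the sparsifiability produced by an earlier one. Everything else is routine bookkeeping and a short estimate, though one has to keep the target bound loose enough ($2^{\OO(\log^{5/6}k)}$ and no tighter) for the precondition $2d^2\le\tw$ to remain valid all the way to the last call.
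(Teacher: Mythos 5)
Your proof is correct and follows essentially the same route as the paper: partition $V(G)$ into $d^4+1$ distance-5 independent sets, apply \autoref{lem:main} once per class, use that sparsifiability of a vertex survives passage to induced subgraphs, and bound the accumulated treewidth loss by $2^{\OO(d^4\log\log k)}=2^{\OO(\log^{5/6}k)}$. Your extra bookkeeping (the $G^4$-colouring bound, the precondition $2d^2\le\tw$, the small-$k$ case) only fills in details the paper leaves implicit.
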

\begin{proof}
Let $G$ be a graph with treewidth $k$ and maximum degree $d$.
The vertices of $G$ can be partitioned into $d^4+1$ distance-5 independent sets $I_1, \ldots, I_{d^4+1}$ by observing that there are at most $d^4$ vertices at distance at most 4 from any vertex and using a greedy method.
We then sequentially apply \autoref{lem:main} with these distance-5 independent sets, in particular letting $G_0 = G$, and then for each $i$ with $1 \le i \le d^4+1$ letting $G_i$ be the graph obtained by applying \autoref{lem:main} with $G_{i-1}$ and $I_i \cap V(G_{i-1})$.
As taking induced subgraphs only increases distances, $I_i \cap V(G_{i-1})$ is a distance-5 independent set in $G_{i-1}$.
It also holds that once a vertex becomes sparsifiable, it will stay sparsifiable because taking induced subgraphs cannot increase vertex degrees or add new neighbors.
Therefore $G_{d^4+1}$ is sparsifiable.
The graph $G_{d^4+1}$ has treewidth at least \mbox{$k/((d^2+1) \log^\spc k)^{d^4+1}$}, meaning that when $d^5 \le \log k$, the decrease in the treewidth is by a factor of at most
\[((d^2+1) \log^\spc k)^{d^4+1} = 2^{\OO(d^4 (\log (d^2+1) + \log \log^\spc k))} = 2^{\OO(d^4 \log \log^\spc k)} = 2^{\OO(\log^{5/6} k)}.\]
\end{proof}

\autoref{the:main} follows from using \autoref{lem:end} to obtain a sparsifiable induced subgraph with treewidth at least $k/2^{\OO(\log^{5/6} k)}$, then applying the grid minor theorem~\cite{DBLP:journals/jctb/ChuzhoyT21} to obtain a $\Omega(k^{1/10}) \times \Omega(k^{1/10})$ grid minor, and then using \autoref{lem:spars} to argue that this grid minor, after contracting the corners, is also an induced minor.

\section{Proofs of corollaries}
We detail how \autoref{cor:main} and \autoref{cor:alg} follow from \autoref{the:main}.

\cormain*
\begin{proof}
The proof of Theorem 1.1 in~\cite{DBLP:journals/ejc/AboulkerAKST21} shows that there is a function $g : \mathbb{N} \rightarrow \mathbb{N}$ so that if a graph contains a triangulated $g(k) \times g(k)$-grid as an induced minor, then it contains either a $k \times k$-wall or the line graph of a $k \times k$-wall as an induced subgraph.
Then, this corollary follows from \autoref{the:main} by observing that a $k \times k$-grid contains a triangulated $k/6 \times k/6$-grid as an induced minor and setting $f_d(k) = c \cdot (g(k)^{10}+2^{d^5})$ for some large enough constant $c$.
\end{proof}

\coralg*
\begin{proof}
Let $G$ be the input graph and assume that $n$ is sufficiently large compared to $H$.
While there is a vertex of degree at least $\log^{1/5} (n/\log n)$ in $G$, we branch from this vertex.
This branching tree has size at most 
\[n {n \choose n / \log^{1/5} (n/\log n)}<n(e \cdot \log^{1/5}(n/\log n))^{n/(\log^{1/5}(n/\log n))} = 2^{\OO(n / \log^{1/6} n)}.\]
Then, if all vertices of $G$ have degree less than $\log^{1/5} (n/\log n)$ and $G$ has treewidth more than $c n/\log n$ for some constant $c$, then by \autoref{the:main} $G$ contains a $\Omega(n^{1/11}) \times \Omega(n^{1/11})$ grid induced minor, and therefore contains any planar graph $H$ as an induced minor if $n$ is sufficiently large compared to $H$.
Therefore, if all vertices of $G$ have degree less than $\log^{1/5} (n/\log n)$, then $G$ must have treewidth $\OO(n / \log n)$, and we can use a parameterized single-exponential time constant-factor approximation of treewidth~\cite{DBLP:conf/focs/Korhonen21} together with dynamic programming~\cite{DBLP:journals/dam/ArnborgP89} to solve maximum weight independent set in $2^{\OO(n / \log n)}$ time.
\end{proof}

\section*{Acknowledgements}
I thank Daniel Lokshtanov for telling me about the conjecture of Aboulker et al.

\bibliographystyle{plain}
\bibliography{paper}

\end{document}